\documentclass[]{amsart}
\usepackage{amscd,amsthm,amssymb,amsfonts,amsmath,euscript}




\theoremstyle{plain}
\newtheorem{thm}{Theorem}
\newtheorem{lemma}[thm]{Lemma}
\newtheorem{prop}[thm]{Proposition}

\theoremstyle{definition}
\newtheorem{defn}[thm]{Definition}

\theoremstyle{remark}
\newtheorem{remark}[thm]{Remark}

\newcommand{\nc}{\newcommand}

\numberwithin{equation}{section}

\def\makeop#1{\expandafter\def\csname#1\endcsname
  {\mathop{\rm #1}\nolimits}\ignorespaces}
\makeop{Hom}   \makeop{End}   \makeop{Aut}   \makeop{Isom}  \makeop{Pic} 
\makeop{Gal}   \makeop{ord}   \makeop{Char}  \makeop{Div}   \makeop{Lie} 
\makeop{PGL}   \makeop{Corr}  \makeop{PSL}   \makeop{sgn}   \makeop{Spf}
\makeop{Spec}  \makeop{Tr}    \makeop{Nr}    \makeop{Fr}    \makeop{disc}
\makeop{Proj}  \makeop{supp}  \makeop{ker}   \makeop{im}    \makeop{dom}
\makeop{coker} \makeop{Stab}  \makeop{SO}    \makeop{SL}    \makeop{SL}
\makeop{Cl}    \makeop{cond}  \makeop{Br}    \makeop{inv}   \makeop{rank}
\makeop{id}    \makeop{Fil}   \makeop{Frac}  \makeop{GL}    \makeop{SU}
\makeop{Nrd}   \makeop{Sp}    \makeop{Tr}    \makeop{Trd}   \makeop{diag}
\makeop{Res}   \makeop{ind}   \makeop{depth} \makeop{Tr}    \makeop{st}
\makeop{Ad}    \makeop{Int}   \makeop{tr}    \makeop{Sym}   \makeop{can}
\makeop{length}\makeop{SO}    \makeop{torsion} \makeop{GSp} \makeop{Ker}
\makeop{Adm}
\def\makebb#1{\expandafter\def
  \csname bb#1\endcsname{{\mathbb{#1}}}\ignorespaces}
\def\makebf#1{\expandafter\def\csname bf#1\endcsname{{\bf
      #1}}\ignorespaces} 
\def\makegr#1{\expandafter\def
  \csname gr#1\endcsname{{\mathfrak{#1}}}\ignorespaces}
\def\makescr#1{\expandafter\def
  \csname scr#1\endcsname{{\EuScript{#1}}}\ignorespaces}
\def\makecal#1{\expandafter\def\csname cal#1\endcsname{{\mathcal
      #1}}\ignorespaces} 

\def\doLetters#1{#1A #1B #1C #1D #1E #1F #1G #1H #1I #1J #1K #1L #1M
                 #1N #1O #1P #1Q #1R #1S #1T #1U #1V #1W #1X #1Y #1Z}
\def\doletters#1{#1a #1b #1c #1d #1e #1f #1g #1h #1i #1j #1k #1l #1m
                 #1n #1o #1p #1q #1r #1s #1t #1u #1v #1w #1x #1y #1z}
\doLetters\makebb   \doLetters\makecal  \doLetters\makebf
\doLetters\makescr 
\doletters\makebf   \doLetters\makegr   \doletters\makegr
     \def\qed{\qedmark\medbreak}%
\def\qedmark{{\enspace\vrule height 6pt width 5pt depth 1.5pt}}%

\normalsize

\makeop{Bl}

\def\Qp{{\bbQ}_p}




\newcommand{\npr}{\noindent }




\nc{\embed}{\hookrightarrow}



\newcommand{\ch}{characteristic }

\nc{\ol}{\overline}
\nc{\wt}{\widetilde}
\nc{\opp}{\mathrm{opp}}

\makeop{Ram}
\makeop{Rep}


\begin{document}
\renewcommand{\thefootnote}{\fnsymbol{footnote}}
\setcounter{footnote}{-1}


\title{Characteristic polynomials of central simple algebras}
\author{Chia-Fu Yu}
\address{
Institute of Mathematics, Academia Sinica and NCTS (Taipei Office)\\
Astronomy Mathematics Building \\
No. 1, Roosevelt Rd. Sec. 4 \\ 
Taipei, Taiwan, 10617} 
\email{chiafu@math.sinica.edu.tw}


\date{\today. 
}                      
\subjclass[2000]{11R52, 11R37, 15A21}
\keywords{characteristic polynomials, central simple algebras,
 rational canonical forms, conjugacy classes.}  

\begin{abstract}
  We characterize characteristic polynomials of elements in a central
  simple algebra. We also give an account for the theory of rational
  canonical forms for separable linear transformations 
  over a central division algebra, and a description of
  separable conjugacy classes of the multiplicative group. 
\end{abstract} 

\maketitle




\def\Mat{{\rm Mat}}
\def\c{{\rm c}}
\def\i{{\rm i}}

\section{Introduction}
\label{sec:01}



Consider a $p$-adic local field $F$ (i.e. a finite field extension of
$\Qp$) and the connected reductive groups $G$ and $G'$ over $F$ of
multiplicative groups of the matrix algebra $\Mat_n(F)$ and a central
division algebra $D$ over $F$ of degree $n$. The local
Jacquet-Langlands correspondence states that there is a one-to-one
correspondence
\[ E^2(G') \simeq E^2(G') \]
between the sets $E^2(G')$ and $E^2(G)$ of essentially
square-integral irreducible smooth representations of $G'$ and $G$
\cite[p. 34]{deligne-kazhdan-vigneras}.
The correspondence is characterized by the following character identity
\begin{equation}
  \label{eq:1}
  \chi_{\pi'}(g')=(-1)^{n-1}\chi_{\pi}(g), \quad \forall\, \{g\}
  \text{ corresponds to} \{g'\} 
\end{equation}
if $\pi$ corresponds to $\pi'$, where $\chi_{\pi'}$ (resp. $\chi_\pi$)
is the character of $\pi'$ (resp. $\pi$), which is a locally constant 
central function defined
at least on the open set $G'(F)^{\rm reg}$ 
(resp. $G(F)^{\rm reg}$) of regular semi-simple elements.
The conjugacy class $\{g\}$ corresponding to
the conjugacy class $\{g'\}$ means that they share the same
characteristic polynomial. Clearly not every regular semi-simple
conjugacy class in $G(F)$ corresponds to a class in $G'(F)$. In this
Note we consider the following basic question in full generality:  \\

\npr {\bf (Q)} Let $A$ be a finite-dimensional central simple algebra
over an arbitrary
field $F$. Which polynomials of degree $\deg(A)$ 
are the characteristic polynomials of elements in $A$? \\

We recall some basic definitions for central simple algebras; 
see \cite{reiner:mo}.

\begin{defn}
  Let $A$ be a (f.d.) central simple algebra over a field
  $F$. 
\begin{enumerate}
\item The {\it degree}, {\it capacity}, and {\it index} of $A$
are defined as
\[ \deg(A):=\sqrt{[A:F]},\quad \c(A):=n,\quad \i(A):=\sqrt{[\Delta:F]}, \]
respectively 
if $A\cong \Mat_n(\Delta)$, where $\Delta$ is a division algebra over
$F$, which is uniquely determined by $A$ up to isomorphism.
The algebra $\Delta$ is also called the {\it division part} of $A$.
\item For any element $x\in A=\Mat_n(\Delta)$, the {\it characteristic
    polynomial of $x$} is defined to be the characteristic polynomial of
  the image of $x$ in $\Mat_{nd}(\bar F)$ under a map
\[ A\to A\otimes_F {\bar F} \stackrel{\rho}{\simeq} \Mat_{nd}(\bar
F), \]
where $\bar F$ is an algebraic closure of $F$ and $d$ is the degree of
$\Delta$. 
This polynomial is independent of
the choice of the isomorphism $\rho$ and it is defined over $F$. 
Denote by $f_x(t)$ the \ch polynomial of $x$. 
\end{enumerate}
\end{defn}

The question {\bf (Q)} would become more interesting when one
specializes $F$ as a global field field.
When $A$ is a quaternion algebra over a number field $F$, the answer  
is well-known. It
is easier to treat the case when the polynomial $f(t):=t^2+at+b$
factorizes in $F[t]$ (the answer is yes if and only if $f(t)$ has
double roots or $A$ splits). 
So we may consider the case when $f(t)$ is irreducible. Let $K$
be a splitting field of $f(t)$, which is unique up to isomorphism. 
Then $f(t)$ is a characteristic polynomial of $A$ 
if and only if there is a $F$-algebra embedding of $K$ in $A$. 
Then one can check the latter condition easily 
by the local-global principle 
(cf. Prasad-Rapinchuk \cite[Proposition
A.1]{prasad-rapinchuk:metakernel96}) which asserts that this is 
equivalent to that whenever a place $v$ of $F$ ramified in
$A$, the $F_v$-algebra $K_v:=K\otimes_F F_v$ is a
field, where $F_v$ denotes the completion of $F$ at the place $v$. 
    
It is obvious that the question {\bf (Q)} should land in the content of
linear algebra when one realizes $A=\Mat_n(\Delta)$ as the algebra of
$\Delta$-linear transformations on the $\Delta$-vector space
$\Delta^n$.    
There are several books (for example the famous book by
Jacobson~\cite{jacobson:abs2}) on linear algebra that deal with
vector spaces over division rings. However, we could not find one 
that deals with some core topics such as
eigenvalues, eigenspaces, Jordan canonical forms and 
rational canonical forms over central division algebras. 
As these are seemingly missing in the literature, we give an account 
for the theory of rational canonical forms 
over a (f.d.) central division $F$-algebra $\Delta$. 
However, our theory is explicit only for {\it separable} linear
transformations. We call a $\Delta$-linear transformation $x$ on
a (f.d.) $\Delta$-vector space $V$ {\it separable} if its
characteristic polynomial $f_x(t)$ is the product
of irreducible separable polynomials in $F[t]$. The restriction of the
separability assumption is due to the separability assumption in
a theorem of Cohen (cf. \cite[Theorem 60, p.~205]{matsumura:ca80}) 
we apply.
The main result of this Note (Theorem~\ref{4})
determines which polynomial is a characteristic polynomial.
After that we give an
explicit description of separable conjugacy classes of the multiplicative 
group $A^\times$ (Theorem~\ref{9}).

From now on, we fix a finite-dimensional central simple algebra $A$
over an arbitrary ground field $F$. Let $A=\End_{\Delta}(V)$, where $\Delta$ is
the division part of $A$ and $V$ is a right vector space over $\Delta$
of dimension $n$. Put $d:=\deg(\Delta)$. \\

\section{Minimal polynomials}
\label{sec:02}
Let $x$ be an element of $A$. The {\it minimal polynomial of $x$} is the
unique monic polynomial $m_x(t)\in F[t]$ of least degree  such that
$m_x(x)=0$ in $A$. Let $F[x]\subset A$ be the $F$-subalgebra generated
by the element $x$. One has 
$ F[x]=F[t]/(m_x(t))$ and $V$ is an
$(F[x],\Delta)$-bimodule. Since $A=\End_{\Delta}(V)$, the space $V$ is a
faithful $F[x]$-module. When the ground division algebra $\Delta$ is
equal to $F$, the theory of rational canonical forms is nothing but
the classification theorem for faithful $F[x]$-modules of
$F$-dimension $n$. Therefore, the classification theorem for
$F[x]$-faithful $(F[x],\Delta)$-bimodules of $\Delta$-dimension $n$ 
is exactly the theory of rational canonical 
forms over $\Delta$. 

\begin{lemma}\label{2}\

\begin{enumerate}

\item Let $p(t)\in F[t]$ be an irreducible polynomial of positive
  degree. Then $p(t)|m_x(t)$ if and only if $p(t)|f_x(t)$.  

\item If $m_x(t)$ is an irreducible polynomial $p(t)$, then
  $f_x(t)=p(t)^a$, where $a:=\deg(A)/\deg p(t)$. 
\end{enumerate}
   
\end{lemma}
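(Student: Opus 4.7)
The plan is to reduce both parts to classical linear algebra over $\bar F$ by base change, since $A\otimes_F \bar F \cong \Mat_{nd}(\bar F)$ where $nd = \deg(A)$. Part (2) will fall out of part (1) by a short degree count, so the real content is (1).

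For (1), I would first confirm that the minimal polynomial is stable under base change: writing $\bar x$ for the image of $x$ in $A\otimes_F \bar F$, I claim the minimal polynomial of $\bar x$ in $\Mat_{nd}(\bar F)$ equals $m_x(t)$. Indeed, since $F \hookrightarrow \bar F$ is flat, the natural map $F[x]\otimes_F \bar F \hookrightarrow A\otimes_F \bar F$ is injective, so $1,x,\dots,x^{\deg m_x - 1}$ remain $\bar F$-linearly independent, meaning no polynomial of strictly smaller degree than $m_x$ annihilates $\bar x$; and $m_x(\bar x)=0$ is automatic. The characteristic polynomial of $\bar x$ in $\Mat_{nd}(\bar F)$ is $f_x(t)$ by the very definition given in the excerpt. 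Classical linear algebra over the algebraically closed field $\bar F$ then yields $m_x \mid f_x$ (Cayley--Hamilton) and, conversely, $f_x \mid m_x^r$ for some $r\ge 1$, so $m_x$ and $f_x$ share the same roots in $\bar F$.

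Now given an irreducible $p(t)\in F[t]$, pick any root $\alpha\in\bar F$ of $p$. Since $p$ is the minimal polynomial of $\alpha$ over $F$ and $m_x, f_x\in F[t]$, one has $p\mid m_x \iff m_x(\alpha)=0$ and $p\mid f_x \iff f_x(\alpha)=0$. By the previous paragraph these two conditions on $\alpha$ are equivalent, proving (1).

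For (2), assume $m_x(t)=p(t)$ is irreducible. Every monic irreducible factor $q(t)\in F[t]$ of $f_x(t)$ satisfies $q\mid f_x$, hence $q\mid m_x = p$ by (1), forcing $q=p$. Therefore $f_x(t)=p(t)^a$ for some $a\ge 1$, and comparing degrees gives $a\cdot\deg p = \deg f_x = nd = \deg(A)$, so $a=\deg(A)/\deg p$.

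The only non-formal step is the stability of the minimal polynomial under base change, and I expect this to be the main (minor) obstacle to state cleanly; everything else is a direct transcription of the familiar commutative story to the image $\bar x\in \Mat_{nd}(\bar F)$.
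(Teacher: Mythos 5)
Your proof is correct and takes essentially the same route as the paper: reduce to classical linear algebra in $\Mat_{nd}(\bar F)$ via base change, using that $m_x$ and $f_x$ share roots over $\bar F$ and that an irreducible $p\in F[t]$ divides a polynomial in $F[t]$ iff one (hence any) of its roots in $\bar F$ does. You are somewhat more careful than the paper in spelling out that the minimal polynomial is stable under base change (via flatness), a point the paper leaves implicit; otherwise the arguments coincide.
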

\begin{proof}
  (1) This is a basic result in linear algebra 
   when $A$ is a matrix algebra over $F$. 
  Suppose $p(t)|f_x(t)$. Let $\alpha\in \bar F$ be a root of
  $p(t)$. Since $A\otimes \bar F$ is a matrix algebra over $\bar F$,
  the basic result shows that $(t-\alpha)|f_x(t)$ in $\bar F[t]$ if
  and only if $(t-\alpha)|m_x(t)$ in $\bar F[t]$.
  Therefore, $(t-\alpha)|m_x(t)$ in $\bar F[t]$ and hence  
  $p(t)|m_x(t)$ in $F[x]$. 

  (2) This follows from (1). \qed
\end{proof}

We write the minimal polynomial $m_x(t)=\prod_{i=1}^s p_i(t)^{e_i}$ 
into the product of irreducible polynomials, where $p_i(t)\in F[t]$ is a
monic irreducible  polynomial, $p_i\neq p_j$ if $i\neq j$, and $e_i$
is a positive integer. Put $F_i:=F[t]/(p_i(t))$ and $\wt
F_i:=F[t]/(p_i(t)^{e_i})$. By the Chinese Remainder Theorem, one has
\begin{equation}
  \label{eq:2}
   F[x]\simeq \prod_{i=1}^s \wt F_i.   
\end{equation}
Note that $F[x]$ is an Artinian ring.
The decomposition (\ref{eq:2}) is nothing but the
decomposition of $F[x]$ into the product of local Artinian rings. 
Since $\wt F_i$ is a local Artinian ring with residue field $F_i$, it
is a complete local Noetherian $F$-algebra with residue field
$F_i$. By Cohen's theorem \cite[Theorem 60, p.~205]{matsumura:ca80}, 
there is a ring monomorphism $s_i:F_i\to \wt F_i$ such 
that $\pi\circ s_i={\rm id}_{F_i}$
for all $i$, where $\pi:\wt F_i\to F_i$ is the natural projection. 
The cotangent space $\grm_{\wt F_i}/\grm^2_{\wt F_i}$ of $\wt F_i$ 
is of $F$-dimension $\deg p_i(t)$, and
hence it is a one-dimensional $F_i$-vector space. Let $\varepsilon_i\in
\wt F_i$ be a generator of the maximal ideal $\grm_{\wt F_i}$ 
of $\wt F_i$, one yields
$\wt F_i= s_i(F_i)[\varepsilon_i]/(\varepsilon_i^{e_i})$. \\

\section{Rational canonical forms}
\label{sec:03}
We now come to the $(F[x],\Delta)$-bimodule structure on $V$,
or equivalently, the (right) $\Delta\otimes_F F[x]$-module structure
on $V$. The decomposition (\ref{eq:2}) gives a decomposition of $V$
into $\Delta$-submodules $V_i$ on which the $F$-algebra $\wt F_i$ acts
faithfully:
\begin{equation}
  \label{eq:3}
   V=\bigoplus_{i=1}^s V_i, \quad \dim_{\Delta} V_i=:n_i>0, \quad
\sum_{i=1}^s n_i=n.
\end{equation}

We say an element $y$ of $A$ {\it separable} if its \ch polynomial is
the product of irreducible {\it separable} polynomials in $F[t]$. As
$A=\End_{\Delta}(V)$, a $\Delta$-linear transformation on $V$ is
called {\it separable} if it is separable as an element in $A$.  
Assume that
$x$ is 
separable. Then one can choose $s_i$ so that $F\subset s_i(F_i)$ by 
Cohen's theorem.   
Suppose that $\Delta\otimes_F s_i(F_i)=\Mat_{c_i}(\Delta_i)$,
where $c_i$ (resp. $\Delta_i$) is the capacity (resp. the division
part) of the central simple algebra 
$\Delta\otimes_F s_i(F_i)$ over $s_i(F_i)$. Each $V_i$ is a right
$\Delta\otimes_F \wt F_i$-module. One has 
\[ \Delta\otimes_F \wt F_i=[\Delta\otimes_F s_i(F_i)]\otimes_{s_i(F_i)}
\wt F_i=\Mat_{c_i}(\wt \Delta_i), \quad \wt \Delta_i:=\Delta_i
\otimes_{s_i(F_i)} \wt F_i=\Delta_i[\varepsilon_i]/(\varepsilon_i^{e_i}). \]
By the Morita equivalence, we have $V_i= W_i^{\oplus c_i}$, where
$W_i$ is a $\wt \Delta_i$-module. Since
$\Delta_i[\varepsilon_i]/(\varepsilon_i^{e_i})$ is a non-commutative PID,
there is an isomorphism of $\wt \Delta_i$-modules
\begin{equation}
  \label{eq:5}
  W_i\simeq \bigoplus_{j=1}^{t_{i}}
  \Delta_i[\varepsilon_i]/(\varepsilon_i^{m_j})  
\end{equation}
for some positive integers $1\le m_1\le \dots \le m_{t_i}\le e_i$. 
As $\wt F_i$ acts faithfully on $W_i$, one has $m_{t_i}=e_i$.    
The decomposition (\ref{eq:3}) is exactly the decomposition of $V$
into generalized eigenspaces in the classical case. 
Moreover, the decomposition (5) is exactly a 
decomposition of each generalized eigenspace into 
indecomposable invariant components. 
These give all information to make the 
rational canonical form for a suitable choice of basis as one does in
linear algebra when $\Delta=F$ (so far our theory is explicit only for
separable $\Delta$-linear transformations). 
We leave the details to the reader.  

\begin{remark}

  One needs explicit information of $\Delta, F$ and $F_i$ in order to
  compute the capacity $c_i$ and the division part $\Delta_i$ of
  $\Delta\otimes_F s_i(F_i)$ (or of $\Delta\otimes_F F_i$). 
  When $F$ is a global field, these can be computed explicitly using
  Brauer groups over local fields \cite{serre:lf} 
  and the period-index relation \cite[Theorem 32.19,
  p.~280]{reiner:mo}; see 
\cite[Section 3]{shih-yang-yu} for details. 
\end{remark}

\section{Which polynomial is characteristic?}
\label{sec:04}
Let $f(t)\in F[t]$ be a monic polynomial of degree $nd$. We will
determine when $f(t)=f_x(t)$ for some element $x\in A$. 
We say that $f(t)$ is a {\it characteristic polynomial of $A$} if
$f(t)=f_x(t)$ for some element $x\in A$. Our main result of this Note
is the following:

\begin{thm}\label{4}
  Let $f(t)\in F[t]$ be a monic polynomial of degree $nd$ and let
  $f(t)=\prod_{i=1}^s p_i(t)^{a_i}$ be the factorization into
  irreducible polynomials. Put $F_i:=F[t]/(p_i(t))$. Then $f(t)$ is a
  characteristic polynomial of $A$ if and only if for all $i=1,\dots,
  s$, one has
  \begin{itemize}
  \item [(a)] $a_i\, \deg p_i(t)=n_i
    d$ for some positive integer $n_i$, and
  \item [(b)] $[F_i:F]\mid n_i\cdot \c(\Delta\otimes_F F_i)$. 
  \end{itemize}
\end{thm}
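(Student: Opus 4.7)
The plan is to reduce the theorem to the prime-power case by decomposing along the factorization $f=\prod p_i^{a_i}$, and to bridge (a) and (b) with the existence of an appropriate $x$ via the elementary dimension arithmetic of simple $\Delta\otimes_F F_i$-modules. The preliminary observation is this: writing $\Delta\otimes_F F_i\cong \Mat_{c_i}(\Delta_i)$ and letting $S_i$ be its (unique up to isomorphism) simple right module, $S_i$ becomes a right $\Delta$-module via $\delta\mapsto\delta\otimes 1$ and therefore has integral $\Delta$-dimension; computing $\dim_F S_i=d^2\deg p_i(t)/c_i$ then forces $\dim_\Delta S_i=\deg p_i(t)/c_i$, and in particular $c_i\mid \deg p_i(t)$. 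Consequently a right $\Delta\otimes_F F_i$-module of $\Delta$-dimension $n_i$ exists precisely when $\deg p_i(t)\mid n_i c_i$.

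For the sufficiency of (a) and (b), I take $M_i:=S_i^{\oplus r_i}$ with $r_i=n_i c_i/\deg p_i(t)$, so $\dim_\Delta M_i=n_i$, and let $x_i\in\End_\Delta(M_i)$ be multiplication by $t\in F_i\subset \Delta\otimes_F F_i$. Since $p_i(x_i)=0$ and $M_i\neq 0$, we have $m_{x_i}(t)=p_i(t)$; Lemma~\ref{2}(2) yields $f_{x_i}(t)=p_i(t)^{n_i d/\deg p_i(t)}=p_i(t)^{a_i}$ via (a). Assembling $V':=\bigoplus_i M_i$ (of $\Delta$-dimension $\sum n_i=n$) and $x:=\bigoplus_i x_i$, one gets $f_x=\prod p_i^{a_i}=f$, and transporting along any $\Delta$-isomorphism $V'\cong V$ gives the desired element of $A$.

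For necessity, suppose $f=f_x$ and factor $m_x(t)=\prod p_i(t)^{e_i}$, where Lemma~\ref{2}(1) guarantees the same irreducible factors appear as in $f$. The CRT decomposition \eqref{eq:2} induces $V=\bigoplus V_i$ with $V_i=\ker p_i(x)^{e_i}$ and $\wt F_i$ acting faithfully; setting $n_i:=\dim_\Delta V_i$ and applying Lemma~\ref{2}(1) to $x|_{V_i}$, the restriction $f_{x|_{V_i}}$ is a power of $p_i(t)$, and matching with $f$ gives $f_{x|_{V_i}}=p_i(t)^{a_i}$, whence (a). For (b), the central image of $\wt F_i$ in $\Delta\otimes_F\wt F_i$ makes the $\grm_i$-adic filtration
\[ V_i\supset V_i\grm_i\supset V_i\grm_i^2\supset\cdots\supset V_i\grm_i^{e_i}=0 \]
a chain of $(\Delta,\wt F_i)$-submodules whose successive quotients are right $\Delta\otimes_F F_i$-modules; by the preliminary observation each such quotient has $\Delta$-dimension divisible by $\deg p_i(t)/c_i$, so summing over the filtration yields $(\deg p_i(t)/c_i)\mid n_i$, which is (b).

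The main obstacle I foresee is the bookkeeping behind the divisibility $c_i\mid\deg p_i(t)$: avoiding circularity calls for extracting it from the integrality of $\dim_\Delta S_i$ (which uses only that modules over a skew field are free of well-defined rank), after which the necessity argument proceeds cleanly via the $\grm_i$-adic filtration.
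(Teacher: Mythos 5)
Your proof is correct, and it takes a route that is recognizably parallel to but organizationally different from the paper's. The paper factors the argument through an embedding reformulation: after reducing to the prime\mbox{-}power case (Proposition~\ref{5}), Lemma~\ref{6} translates ``$p(t)^a$ is a characteristic polynomial of $\Mat_{n_i}(\Delta)$'' into ``$F_i$ embeds into $\Mat_{n_i}(\Delta)$ as an $F$-algebra'' (the necessity direction there builds a semi\mbox{-}simplification $x^{\rm ss}$ from a maximal chain of $F[x]$-invariant $\Delta$-subspaces), and Lemma~\ref{7} settles the embedding question by counting dimensions of $\Delta\otimes_F F_i$-modules. You skip the embedding reformulation entirely and work directly at the module level: in the sufficiency direction you exhibit $x$ explicitly as multiplication by $\bar t$ on a sum of simple $\Delta\otimes_F F_i$-modules of the right $\Delta$-dimension, and in the necessity direction you replace the paper's semi\mbox{-}simplification by passing to the $\grm_i$-adic associated graded of $V_i$ and summing $\Delta$-dimensions of the graded pieces (each a $\Delta\otimes_F F_i$-module). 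The arithmetic is identical in both routes --- the simple right $\Delta\otimes_F F_i$-module has $\Delta$-dimension $\deg p_i(t)/c_i$, so $[F_i:F]\mid n_i c_i$ is exactly the existence/divisibility constraint --- and you correctly extract $c_i\mid\deg p_i(t)$ from integrality of $\dim_\Delta S_i$ before using it. What your packaging buys is self\mbox{-}containment (no appeal to a separate embedding criterion such as \cite{yu:embed}, and no need for the intermediate notion of an $F$-embedding of $F_i$); what the paper's packaging buys is modularity, since Lemma~\ref{7} is an independently useful embedding criterion. Neither route needs the separability hypothesis, which both you and the paper correctly avoid for Theorem~\ref{4}. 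One small presentational note: you write the conclusion of the necessity step as $(\deg p_i(t)/c_i)\mid n_i$; it is worth one line to observe this is equivalent to $[F_i:F]\mid n_i c_i$ as stated in (b), using that $\deg p_i(t)/c_i\in\bbN$.
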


Suppose $f(t)=f_x(t)$ for some $x\in A$. Then the minimal polynomial
$m_x(t)$ of $x$ is equal to $\prod_{i=1}^s p_i(t)^{e_i}$ 
for some positive integers $e_i$ with $e_i\le a_i$. 
Discussion in Section~\ref{sec:03} shows that there is a decomposition
of $V$ into $\Delta$-subspaces $V_i$ say of $\Delta$-dimension $n_i$
on which the $F$-algebra $\wt F_i$ acts faithfully. 
Regarding $x:V\to V$ as a $\Delta$-linear transformation, let $x_i$ 
be the restriction of $x$ on the invariant subspace $V_i$. 
Then we have
\begin{equation}
  \label{eq:55}
  f_{x_i}(t)=p_i(t)^{a_i}, \quad m_{x_i}(t)=p_i(t)^{e_i}, \quad
\text{and}\quad a_i \deg(p_i(t))=n_i d.
\end{equation}
This shows the following proposition.

\begin{prop}\label{5}
  Let $f(t)$ be as in Theorem~\ref{4}. Then $f(t)$
  is a characteristic polynomial of $A$ if and only if for
  all $i=1,\dots,s$, $a_i \deg(p_i(t))=n_i d$ for some $n_i\in \bbN$
  and the 
  polynomial $p_i(t)^{a_i}$ is a characteristic polynomial of 
  $\Mat_{n_i}(\Delta)$.  
\end{prop}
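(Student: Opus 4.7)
\textbf{Proof plan for Proposition~\ref{5}.} The plan is to split the biconditional into its two directions. The forward direction is essentially the content of the discussion already preceding the statement; the converse requires a short block-diagonal construction.

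For the forward direction, suppose $f(t)=f_x(t)$ for some $x\in A$. Using the Chinese Remainder decomposition (\ref{eq:2}) of $F[x]$ into the local Artinian factors $\wt F_i$ and the induced $\Delta$-module decomposition $V=\bigoplus_{i=1}^s V_i$ from (\ref{eq:3}), I would let $x_i:=x|_{V_i}$. Then $x_i$ is a $\Delta$-linear transformation on $V_i$, and since $\wt F_i$ acts faithfully on $V_i$, the minimal polynomial $m_{x_i}(t)$ is a power of $p_i(t)$. Applying Lemma~\ref{2}(2) inside $\End_\Delta(V_i)\cong \Mat_{n_i}(\Delta)$ shows $f_{x_i}(t)$ is itself a power of $p_i(t)$. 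Combining $f_x(t)=\prod_i f_{x_i}(t)$ with the given factorization of $f(t)$ then forces $f_{x_i}(t)=p_i(t)^{a_i}$ and $a_i\deg p_i(t)=\deg f_{x_i}(t)=n_i d$. By construction $p_i(t)^{a_i}$ is realized as the characteristic polynomial of $x_i\in \Mat_{n_i}(\Delta)$, as required.

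For the converse, I would first observe that summing $a_i\deg p_i(t)=n_i d$ over $i$ gives $\sum_i n_i = n$, since $\deg f = nd$. Pick any $\Delta$-linear direct sum decomposition $V=\bigoplus_{i=1}^s V_i$ with $\dim_\Delta V_i = n_i$, so that $\End_\Delta(V_i)\cong \Mat_{n_i}(\Delta)$. By hypothesis there exist elements $x_i\in \End_\Delta(V_i)$ with $f_{x_i}(t)=p_i(t)^{a_i}$. The block-diagonal operator $x:=\bigoplus_i x_i\in \End_\Delta(V)=A$ then satisfies $f_x(t)=\prod_i f_{x_i}(t)=\prod_i p_i(t)^{a_i}=f(t)$.

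The only substantive point is the multiplicativity $f_x(t)=\prod_i f_{x_i}(t)$ of the characteristic polynomial along $\Delta$-invariant decompositions of $V$, which I would expect to be the main obstacle since the definition of $f_x(t)$ is through a chosen splitting $\rho\colon A\otimes_F \bar F\simeq \Mat_{nd}(\bar F)$. This reduces to the observation that after base change to $\bar F$, the decomposition $V\otimes_F \bar F=\bigoplus_i V_i\otimes_F \bar F$ is an $x$-stable decomposition of $\bar F$-vector spaces, so via Morita the induced endomorphism is block-diagonal and its characteristic polynomial factors accordingly; this is independent of the choice of $\rho$. Once this is in hand, both implications reduce to bookkeeping.
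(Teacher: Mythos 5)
Your proof is correct and essentially the same as the paper's. The forward direction uses the same CRT decomposition (\ref{eq:2})--(\ref{eq:3}) of $V$ into $\Delta$-invariant summands $V_i$ and restricts $x$ to $x_i$; the converse is the obvious block-diagonal assembly, which the paper leaves implicit with ``This shows the following proposition.'' One small slip: to conclude that $f_{x_i}(t)$ is a power of $p_i(t)$ you should cite Lemma~\ref{2}(1), not Lemma~\ref{2}(2), since $m_{x_i}(t)=p_i(t)^{e_i}$ need not be irreducible when $e_i>1$; part~(1) gives that every irreducible factor of $f_{x_i}$ already divides $m_{x_i}$, hence equals $p_i$. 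Your remark singling out the multiplicativity $f_x=\prod_i f_{x_i}$ along $\Delta$-invariant decompositions as the substantive point is apt, and your Morita/base-change justification of it is exactly the right way to see that this is independent of the choice of $\rho$.
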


Therefore, it suffices to consider the case where $f(t)$ is a power of
an irreducible polynomial. 

\begin{lemma}\label{6}
  Let $p(t)\in F[t]$ be a monic irreducible polynomial and put
  $E:=F[t]/(p(t))$. Then a polynomial $f(t)$ of the form $p(t)^a$ 
  of degree $nd$ is
  a characteristic polynomial of $A$ if and only if there
  is an $F$-algebra embedding of $E$ in $A$. 
\end{lemma}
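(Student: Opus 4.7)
The plan is to prove the two directions separately, with the easy backward direction following from Lemma~\ref{2} and the forward direction carrying the substance.

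For the backward direction ($\Leftarrow$), given an $F$-algebra embedding $\iota : E \hookrightarrow A$, I set $x := \iota(\bar t)$, the image of the class of $t$ in $E = F[t]/(p(t))$. Then $p(x) = 0$, so $m_x(t) \mid p(t)$; since $\iota(E) \cong E$ is a field, $m_x(t) = p(t)$. Lemma~\ref{2}(2) then gives $f_x(t) = p(t)^{\deg(A)/\deg p(t)} = p(t)^a$, matching $f(t)$ exactly.

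For the forward direction ($\Rightarrow$), suppose $f_x(t) = p(t)^a$ for some $x \in A$; by Lemma~\ref{2}(1), $m_x(t) = p(t)^e$ for some $1 \le e \le a$. The key idea is to consider the $p(x)$-adic filtration
\[
  V = V_0 \supset V_1 \supset \cdots \supset V_e = 0, \qquad V_i := p(x)^i V.
\]
Since $p(x) \in A = \End_\Delta V$ commutes with the right $\Delta$-action, each $V_i$ is a right $\Delta$-submodule of $V$, and every graded piece $\operatorname{gr}_i V := V_i / V_{i+1}$ is killed by $p(x)$, hence carries a right module structure over $\Delta \otimes_F (F[x]/(p(x))) = \Delta \otimes_F E$. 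Writing $\Delta \otimes_F E = \Mat_c(\Delta_E)$ with $c = \c(\Delta \otimes_F E)$, $d_E = \deg \Delta_E$, and $c d_E = d$, Morita equivalence expresses each $\operatorname{gr}_i V$ as a direct sum of $m_i \ge 0$ copies of the simple right $\Delta \otimes_F E$-module (of $F$-dimension $[E:F]\,d\,d_E$).

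Comparing $F$-dimensions through $\dim_F V = \sum_i \dim_F \operatorname{gr}_i V$ yields $nd^2 = [E:F]\,d\,d_E \sum_i m_i$, i.e.\ $\sum_i m_i = nc/[E:F]$. Since $V \ne 0$, at least one $m_i$ is positive, so this rational number is a positive integer and $[E:F] \mid nc$. Conversely, setting $m := nc/[E:F]$, the same dimension count shows that a direct sum of $m$ copies of the simple $\Delta \otimes_F E$-module is a free right $\Delta$-module of rank $n$, hence isomorphic to $V$; transporting the $\Delta \otimes_F E$-structure across this isomorphism produces the desired $F$-algebra embedding $E \hookrightarrow \End_\Delta V = A$. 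The main obstacle I anticipate is in the forward direction: one is tempted to lift $E$ into $F[x] = F[t]/(p(t)^e)$ via a Cohen-type section, but this genuinely fails when $p(t)$ is inseparable. Passing to the associated graded sidesteps Cohen entirely by forcing $p(x)$ to act as zero on each graded piece, reducing the question to the dimension count above.
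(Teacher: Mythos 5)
Your backward direction is essentially identical to the paper's: set $x := \iota(\bar t)$, note $m_x = p$, and apply Lemma~\ref{2}(2). Your forward direction is correct but takes a different route. The paper passes to a maximal chain $0 = V_0 \subset V_1 \subset \cdots \subset V_l = V$ of $F[x]$-invariant $\Delta$-subspaces, observes that $x$ has minimal polynomial $p(t)$ on each factor $V_i/V_{i-1}$, forms the semisimplification $x^{\mathrm{ss}}$ by transporting $\oplus_i \bar x_i$ back to $V$ via any $\Delta$-isomorphism $\oplus_i V_i/V_{i-1} \simeq V$, and sends $\bar t \mapsto x^{\mathrm{ss}}$. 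You instead use the explicit $p(x)$-adic filtration $V_i := p(x)^i V$, which has the virtue of making it transparent that $p(x)$ annihilates each graded piece, whereas the paper's maximal chain needs the (brief) observation that on a simple $(F[x],\Delta)$-bimodule the nilpotent element $p(x)$ must act as zero. Both are really the same idea: pass to an associated graded that is an $(E,\Delta)$-bimodule $\Delta$-isomorphic to $V$, then transport the $E$-action.

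Two small remarks. First, your detour through the dimension count ($\sum_i m_i = nc/[E:F]$, then rebuilding a $\Delta\otimes_F E$-module of rank $n$) is more work than needed for this lemma: once you have $\operatorname{gr} V = \oplus_i V_i/V_{i+1}$ as a right $\Delta\otimes_F E$-module with $\dim_\Delta \operatorname{gr} V = \dim_\Delta V = n$, you can transport the $E$-action across any $\Delta$-isomorphism $\operatorname{gr} V \simeq V$ directly, with no Morita or dimension bookkeeping. The arithmetic you carried out is essentially the content of Lemma~\ref{7}, which the paper proves separately and then combines with this lemma. Second, your concern about Cohen's theorem is a reasonable instinct but not actually the obstacle here: the paper's proof of this lemma (like yours) makes no use of Cohen's theorem or separability, so both arguments already handle inseparable $p(t)$; Cohen's theorem is only invoked in Section~\ref{sec:03} for the finer rational canonical form.
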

\begin{proof}
  Let $\bar t$ be the image of $t$ in $E$ and suppose that there is an
  $F$-algebra embedding $\rho:E\to A$. Put $x:=\rho(\bar
  t)$. Then $m_x(t)=p(t)$ and $f_x(t)=p(t)^a$ by Lemma 2 (2).
  Conversely, suppose there is an element $x\in A$ such that
  $f(t)=f_x(t)$. Let 
\[ 0=V_0\subset V_1 \subset \dots \subset V_{l-1}\subset V_l=V \]
be a maximal chain of $F[x]$-invariant $\Delta$-subspaces. The minimal
  polynomial of $x$ on each factor $V_{i}/V_{i-1}$ is equal to
  $p(t)$. Choosing an $\Delta$-linear isomorphism $\rho:\oplus_{i=1}^l
  V_{i}/V_{i-1}\simeq V$, we get an element $x^{ss}:=\rho\circ x\circ
  \rho^{-1}$ (called a
  semi-simplification of $x$) in $\End_{\Delta}(V)$ 
  whose minimal polynomial is equal to $p(t)$. 
  The map $\bar t\mapsto x^{ss}$
  gives an $F$-embedding of $E$ in $A$. \qed
\end{proof}

When $F$ is a global field and the degree $[E:F]=\deg(A)$ is maximal, 
one can use the
local-global principle to check the condition in
Lemma~\ref{6}. However, when the degree $[E:F]$ is not maximal, the
local-global principle for embedding $E$ in $A$ over $F$ fails in general;
see constructions of counterexamples in 
\cite[Sections 4 and 5]{shih-yang-yu}. 
The following lemma provides an alternative method to check this
condition. 
\begin{lemma}\label{7}
  Let $E$ be as in Lemma~\ref{6}. There is an $F$-algebra embedding
  of $E$ in $A$ if and only if $[E:F]\mid n\cdot
  \c(\Delta\otimes_F E)$.  
\end{lemma}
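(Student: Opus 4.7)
The plan is to reinterpret an $F$-algebra embedding $E \hookrightarrow A = \End_\Delta(V)$ as a compatible module structure on $V$, and then run Morita theory. Concretely, such an embedding is the same as giving $V$ the structure of a right module over $B := \Delta \otimes_F E$ extending its right $\Delta$-structure (using that $E$ is commutative, so its left action coincides with a right action, which must commute with the $\Delta$-action by definition of $\End_\Delta$). Since $E/F$ is a field extension and $\Delta$ is central simple over $F$, the algebra $B$ is central simple over $E$. Write $B \cong \Mat_c(D)$, where $c := \c(\Delta \otimes_F E)$ and $D$ is the division part, and set $i := d/c = \i(B)$. The (essentially unique) simple right $B$-module $M \cong D^c$ then has $\dim_E M = c \cdot i^2 = d^2/c$, hence $\dim_F M = [E:F] \cdot d^2/c$.

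For the forward direction, an embedding gives an isomorphism $V \cong M^{\oplus k}$ of right $B$-modules for some $k \ge 1$; comparing $\dim_F V = nd^2 = k \dim_F M$ yields $k \, [E:F] = n c$, so $[E:F] \mid n \c(\Delta \otimes_F E)$.

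For the converse, assume $[E:F] \mid nc$ and set $k := nc/[E:F]$. To transport a $B$-action onto $V$ via a $\Delta$-linear isomorphism $V \cong M^{\oplus k}$, it must be checked that $M^{\oplus k}$ has $\Delta$-rank exactly $n$, equivalently that $M$ has integer $\Delta$-rank $[E:F]/c$. The crucial substep is therefore $c \mid [E:F]$, which I would deduce from the classical index–degree relation: a splitting field $L$ of $B$ with $[L:E] = i$ satisfies $L \otimes_F \Delta \cong L \otimes_E B \cong \Mat_d(L)$, so $L$ splits $\Delta$; since $\i(\Delta) = d$ divides the degree of every splitting field of $\Delta$, we obtain $d \mid [L:F] = i \, [E:F]$, whence $c = d/i \mid [E:F]$. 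With this in hand, $M^{\oplus k}$ has $\Delta$-rank $k[E:F]/c = n$, so we may identify $V \cong M^{\oplus k}$ as right $\Delta$-modules and transport the $B$-action, producing the desired composite $E \hookrightarrow B \hookrightarrow \End_\Delta(V) = A$. Injectivity on $E$ is automatic, since $E$ is a field and the map does not kill the unit.

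The main obstacle is the divisibility $c \mid [E:F]$ in the converse: without it, no right $B$-module of $\Delta$-rank $n$ exists, and the purely numerical condition $[E:F] \mid nc$ could not by itself force an embedding. The splitting-field argument above is the key non-formal input; everything else is Morita equivalence plus a dimension count.
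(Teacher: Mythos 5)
Your proof is correct and follows the same Morita/dimension-counting strategy as the paper: interpret an embedding $E\hookrightarrow A=\End_\Delta(V)$ as a right $B:=\Delta\otimes_F E$-module structure on $V$ extending the $\Delta$-structure, observe that such a module is a direct sum of copies of the simple $B$-module $M$ of $F$-dimension $[E:F]\,d^2/c$, and compare dimensions. The forward and converse directions then reduce to exactly the arithmetic you write down.

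The one place you work harder than necessary is the converse. You flag $c\mid[E:F]$ as the ``key non-formal input'' and derive it from the index–degree relation via a splitting field $L$ of $B$. That argument is correct, but the divisibility is actually automatic: since $\Delta\subset B$, the simple $B$-module $M$ is in particular a left $\Delta$-module, i.e.\ a $\Delta$-vector space, so $\dim_\Delta M=[E:F]/c$ is an integer with no further work. This is precisely what makes the paper's dimension count close the loop in one line: once $\dim_F V$ is a multiple of $\dim_F M$, the module $M^{\oplus k}$ is a $\Delta$-vector space of the same $F$-dimension as $V$, hence $\Delta$-isomorphic to $V$, and the $B$-action transports. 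Your splitting-field digression is a legitimate alternative way to see $c\mid[E:F]$, and it does have the side benefit of exhibiting the relation $\i(\Delta)=\i(B)\cdot\c(B)$ in terms of splitting fields, but it is not needed for the lemma.
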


\begin{proof}
   This is a special case of \cite[Theorem~2.9]{yu:embed}. We provide a
   direct proof for the reader's convenience. Write $\Delta\otimes_F
   E=\Mat_{c}(\Delta')$, where $\Delta'$ is the
  division part of the central simple algebra $\Delta\otimes_F E$ over
  $E$. 
  An $F$-algebra embedding of
  $E$ into $A=\Mat_{\Delta}(V)$ exists if and only if 
  $V$ is an $(E,\Delta)$-bimodule,
  or equivalently
  a right $E\otimes_F E=\Mat_c(\Delta')$-module. 
  By the dimension
  counting, the vector space $V$ is a $\Mat_c(\Delta')$-module 
  if and only if  
  \begin{equation}
    \label{eq:42}
    \frac{\dim_F V}{c  [\Delta':F]}\in \bbN.
  \end{equation}
  Note that
  $[E:F][\Delta:F]=c^2[\Delta':F]$. 
  From this relation and that $\dim_F V=n[\Delta:F]$,  
  the condition (\ref{eq:42}) can be written as $[E:F]\mid nc$. 
  This proves the lemma. \qed
\end{proof}

By Proposition~\ref{5} and Lemmas \ref{6} and \ref{7}, 
the proof of Theorem~\ref{4} is complete. \qed

\section{Conjugacy classes}
\label{sec:05}

Let $x\in A$ be an separable element 
as in Section~\ref{sec:03}.
Suppose we have another element $x'\in A$ with the same characteristic 
polynomial $f_{x'}(t)=f_x(t)=\prod_{i=1}^s p_i(t)^{a_i}$ 
and minimal polynomial $m_{x'}(t)=m_x(t)=\prod_{i=1}^s p_i(t)^{e_i}$. 
We may identify $F[x']=F[t]/(m_x(t))=F[x]$. The
$(F[x'],\Delta)$-bimodule structure on $V$ gives a similar
decomposition into 
$\Delta$-submodules $V'_i$ as (\ref{eq:3})
\begin{equation}
  \label{eq:6}
   V=\bigoplus_{i=1}^s V'_i, \quad \dim_{\Delta} V'_i=:n'_i>0, \quad
\sum_{i=1}^s n'_i=n.
\end{equation}
Similarly, we also have 
$V'_i= (W'_i)^{\oplus c_i}$, where $W'_i$ is a $\wt
\Delta_i$-module, and 
there is an isomorphism of $\wt \Delta_i$-modules 
 \begin{equation}
  \label{eq:7}
  W'_i\simeq \bigoplus_{j=1}^{t'_{i}}
   \Delta_i[\varepsilon_i]/(\varepsilon_i^{m'_j}).  
\end{equation}
Note that $n_i=n_i'$ as  
$n_i d=a_i \deg p_i(t)=n'_i d$ (see (\ref{eq:55})). 
The elements $x$ and $x'$ are conjugate by an element in $A^\times$
if and only if the $(F[x],\Delta)$-bimodule structure and
$(F[x'],\Delta)$-bimodule structure on $V$ are equivalent. This is
equivalent to that  
\begin{equation}
  \label{eq:8}
  t_i=t_i', \quad\text{and}\quad (m_1,\dots, m_{t_i})=(m'_1,\dots,
  m'_{t_i}), \quad 
  \forall\, i=1, \dots, s. 
\end{equation}
The tuple $(m_1,\dots, m_{t_i})$ is a partition of the integer
$\dim_{\Delta_i}(W_i)$. This integer is given by the following
proposition.

\begin{prop}\label{8}
  We have $\dim_{\Delta_i} W_i=n_i c_i/\deg
  p_i(t)=a_i/\deg(\Delta_i). $
\end{prop}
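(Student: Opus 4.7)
The plan is to prove Proposition~\ref{8} by elementary dimension counting over $F$, using the various decompositions that have already been set up and the compatibility among degrees, capacities, and indices of the relevant central simple algebras.

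First I would extract the ``capacity relation'' hidden in $\Delta\otimes_F s_i(F_i)=\Mat_{c_i}(\Delta_i)$. Taking $F$-dimensions on both sides (and using $[s_i(F_i):F]=\deg p_i(t)$) gives $d^{2}=c_i^{2}\deg(\Delta_i)^{2}$, i.e.\ $d=c_i\deg(\Delta_i)$. This is the key numerical identity that ties together the invariants of $\Delta$ and of $\Delta_i$.

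Next I would compute $\dim_F V_i$ in two ways. On the one hand, $V_i$ is a $\Delta$-module of $\Delta$-dimension $n_i$, so $\dim_F V_i=n_id^{2}$. On the other hand, from $V_i=W_i^{\oplus c_i}$ we get $\dim_F V_i=c_i\dim_F W_i$. Meanwhile, the decomposition \eqref{eq:5} exhibits $W_i$ as a direct sum of cyclic $\wt\Delta_i$-modules $\Delta_i[\varepsilon_i]/(\varepsilon_i^{m_j})$, each of which has $\Delta_i$-dimension $m_j$ (since $\varepsilon_i$ commutes with $\Delta_i$ as $\wt\Delta_i=\Delta_i\otimes_{s_i(F_i)}\wt F_i$). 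Thus $W_i$ is a free $\Delta_i$-module of rank $\dim_{\Delta_i}W_i=\sum_{j=1}^{t_i}m_j$, and consequently $\dim_F W_i=(\dim_{\Delta_i}W_i)\cdot\deg(\Delta_i)^{2}\cdot\deg p_i(t)$. Equating the two expressions for $\dim_F V_i$ and cancelling, with the help of $d=c_i\deg(\Delta_i)$, yields the first identity $\dim_{\Delta_i}W_i=n_ic_i/\deg p_i(t)$. For the second identity, I would just substitute the relation $a_i\deg p_i(t)=n_id$ from \eqref{eq:55} and again use $d=c_i\deg(\Delta_i)$ to rewrite $n_ic_i/\deg p_i(t)=a_i/\deg(\Delta_i)$.

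Conceptually there is no real obstacle here: everything reduces to bookkeeping of dimensions once one observes that $W_i$ is $\Delta_i$-free with rank readable off from \eqref{eq:5}. The only point that requires a moment of care is verifying that $W_i$ is free over $\Delta_i$ rather than merely over $\wt\Delta_i$; this follows because $\varepsilon_i$ is central in $\wt\Delta_i$, so each summand $\Delta_i[\varepsilon_i]/(\varepsilon_i^{m_j})$ has an obvious $\Delta_i$-basis $1,\varepsilon_i,\dots,\varepsilon_i^{m_j-1}$.
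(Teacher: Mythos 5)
Your proof is correct and follows essentially the same route as the paper: both arguments proceed by $F$-dimension counting via $V_i=W_i^{\oplus c_i}$ together with the degree relation $d=c_i\deg(\Delta_i)$, and then substitute $a_i\deg p_i(t)=n_id$ from \eqref{eq:55} to get the second form. The only difference is that you spell out explicitly why $W_i$ is free of finite rank over $\Delta_i$ (via the $\Delta_i$-basis $1,\varepsilon_i,\dots,\varepsilon_i^{m_j-1}$ of each cyclic summand), a point the paper uses tacitly, and you omit the integrality observation \eqref{eq:54} which the paper records en route because it is needed later in the proof of Theorem~\ref{9} rather than for Proposition~\ref{8} itself.
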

\begin{proof}
  Write $\Delta\otimes s_i(F_i)=\Mat_{c_i}(\Delta_i)=
  \End_{\Delta_i}(L_i)$
  for a right $\Delta_i$-vector space $L_i$. As $\Delta\subset
  \Delta\otimes_F s_i(F_i)$ and $L_i$ is a $\Delta\otimes
  s_i(F_i)$-module, so $L_i$ is a $\Delta$-module. It follows that 
   $(\dim_{F} L_i)/[\Delta:F]\in \bbN$. Put
  $d_i=\deg(\Delta_i)$. We have $d=c_i d_i$ and 
  (using $\dim_F L_i=c_i [\Delta_i:F]$) 
  \begin{equation}
    \label{eq:54}
    c_i[\Delta_i:F]/[\Delta:F]=c_i d_i^2 \deg 
  p_i/d^2=\deg p_i/c_i \in\bbN.
  \end{equation}
Now we have
  \begin{equation}
    \label{eq:9}
    \dim_{\Delta_i}W_i=\frac{\dim_F V_i}{c_i
    [\Delta_i:F]}=\frac{[\Delta:F]\dim_\Delta
    V_i}{c_i[\Delta_i:F]}=\frac{n_i c_i}{\deg p_i}=\frac{d n_i c_i}{d
    \deg p_i}=\frac{a_i}{d_i}. 
  \end{equation}
This proves the proposition. \qed
\end{proof}

Let $S$ be the set of monic separable irreducible polynomials $p$ of
$F[t]$ with $p\neq t$. For each irreducible polynomial $p$ in $F[t]$, 
let $\Delta_p$ be the division part of the central simple algebra
$\Delta\otimes_F k(p)$ over $k(p)$, 
where $k(p)$ is the residue field at the prime
$(p)$. A partition $\lambda=(\lambda_1,\dots, \lambda_t)$ is a
non-decreasing positive integers, and we write
$|\lambda|:=\sum_{i=1}^t \lambda_i$. Put $|\lambda|=0$ if
$\lambda=\emptyset$.


\begin{thm}\label{9}
  The association from $x$ to its \ch polynomial $f_x(t)$ and the
  partitions of the integers $\dim_{\Delta_i} W_i$ by (\ref{eq:5})
  induces a bijection between the set of separable conjugacy classes
  of the 
  multiplicative group $A^\times$ and the set of partition-valued
  functions 
  $\lambda$ on $S$ such that
  \begin{equation}
    \label{eq:10}
    \sum_{p\in S} \deg(p) |\lambda(p)|
    \deg(\Delta_p)=\deg(A). 
  \end{equation}
\end{thm}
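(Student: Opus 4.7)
The plan is to verify that the stated association is well-defined on conjugacy classes, injective, and surjective. Given a separable $x\in A^{\times}$, write $f_x(t)=\prod_{i=1}^s p_i(t)^{a_i}$; since $x$ is a unit, no $p_i$ equals $t$, so every $p_i\in S$. For each $p\in S$ I set $\lambda(p)$ to be the partition $(m_1,\dots,m_{t_i})$ of $\dim_{\Delta_i}W_i$ appearing in (\ref{eq:5}) when $p=p_i$, and $\lambda(p)=\emptyset$ otherwise. Conjugation by an element of $A^{\times}$ preserves the isomorphism class of the $(F[x],\Delta)$-bimodule $V$, so both $f_x$ and the partitions descend to conjugacy classes. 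The sum condition (\ref{eq:10}) follows from Proposition~\ref{8}: $|\lambda(p_i)|\deg(\Delta_{p_i})=a_i$, giving $\sum_{p\in S}\deg(p)|\lambda(p)|\deg(\Delta_p)=\sum_i a_i\deg(p_i)=\deg(f_x)=\deg(A)$.

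For injectivity, suppose two separable $x,x'\in A^{\times}$ yield the same $\lambda$. Proposition~\ref{8} lets one recover $a_i=|\lambda(p_i)|\deg(\Delta_{p_i})$, so $f_x=f_{x'}$; since the largest part of $\lambda(p_i)$ is $e_i$, one also has $m_x=m_{x'}$. With matching characteristic and minimal polynomials, the conjugacy criterion (\ref{eq:8}) reduces to equality of the partitions, which holds by hypothesis.

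For surjectivity, given $\lambda$ on $S$ with support $\{p_1,\dots,p_s\}$ satisfying (\ref{eq:10}), I set $e_i:=\max\lambda(p_i)$ and $a_i:=|\lambda(p_i)|\deg(\Delta_{p_i})$. Then $f(t):=\prod_i p_i(t)^{a_i}$ satisfies the criteria of Theorem~\ref{4}: condition (a) holds with $n_i:=|\lambda(p_i)|\deg(p_i)/c_i\in\bbN$ by (\ref{eq:54}), and condition (b) becomes the trivial $\deg(p_i)\mid |\lambda(p_i)|\deg(p_i)$. To realize the prescribed partition data, rather than merely some element with characteristic polynomial $f$, I construct $x$ explicitly: form $W_i:=\bigoplus_{j=1}^{t_i}\Delta_i[\varepsilon_i]/(\varepsilon_i^{m_j})$ with the $m_j$ being the parts of $\lambda(p_i)$, set $V_i:=W_i^{\oplus c_i}$ (a right $\Mat_{c_i}(\wt\Delta_i)=\Delta\otimes_F\wt F_i$-module), and let $V':=\bigoplus_i V_i$. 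The computation in the proof of Proposition~\ref{8} gives $\dim_\Delta V_i=n_i$, so (\ref{eq:10}) yields $\dim_\Delta V'=n$. Identifying $V'$ with $V$ as right $\Delta$-modules and letting $x$ act on each $V_i$ by multiplication by the image of $t$ in $\wt F_i$ produces $x\in\End_\Delta(V)=A$ with characteristic polynomial $f(t)$ and partition $\lambda(p_i)$ for each $i$. Finally $x\in A^{\times}$ because $t\nmid m_x(t)$.

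The main technical obstacle is the construction in the surjectivity step: one must assemble the right $\Delta$-action, the $\wt F_i$-action, and the Morita decomposition $\Delta\otimes_F s_i(F_i)=\Mat_{c_i}(\Delta_i)$ coherently on $V'$ and verify that its $\Delta$-dimension is $n$ so that it identifies with $V$. All the arithmetic needed for this is in place from Section~\ref{sec:03} and (\ref{eq:54}), but the bookkeeping has to be done carefully.
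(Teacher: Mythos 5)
Your proof is correct and follows the same overall strategy as the paper (well-definedness via Proposition~\ref{8}, injectivity via the conjugacy criterion (\ref{eq:8}), surjectivity via the numerical conditions of Theorem~\ref{4} and the Morita/Cohen setup of Section~\ref{sec:03}). The one place you go beyond the paper is in surjectivity: the paper only verifies conditions (a) and (b) of Theorem~\ref{4}, which shows $\prod_i p_i^{a_i}$ is a characteristic polynomial of $A$ but does not, strictly speaking, exhibit an element whose partition data equals the prescribed $\lambda$; you close that gap by explicitly assembling $V'=\bigoplus_i W_i^{\oplus c_i}$ with $W_i=\bigoplus_j \Delta_i[\varepsilon_i]/(\varepsilon_i^{m_j})$ and letting $x$ act by multiplication by $\bar t\in\wt F_i$, then checking $\dim_\Delta V'=n$. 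This is a genuine improvement in completeness. Your arithmetic is sound: $n_i=|\lambda(p_i)|\deg p_i/c_i$ is integral by (\ref{eq:54}), $e_i=\max\lambda(p_i)$ recovers the minimal polynomial, and $t\nmid m_x$ guarantees $x\in A^\times$.
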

\begin{proof}
  We have shown the injectivity (see (\ref{eq:8})). 
  We show the surjectivity. Let
  $p_1,\dots, p_s$ be those with $|\lambda(p_i)|\neq 0$. Let
  $d_i:=\deg(\Delta_{p_i})$ and $a_i:=|\lambda(p_i)|
  d_i$. We need to show that the conditions (a) and (b) of
  Theorem~\ref{4} for the polynomial $\prod_{i=1}^s p_i^{a_i}$ 
  are satisfied. But these conditions are satisfied due to 
\[ n_i c_i/\deg
  p_i=a_i/d_i=|\lambda(p_i)| \in \bbN \]
and (see (\ref{eq:54}))
\[ a_i \deg 
  p_i/d=(a_i/d_i)(\deg p_i/c_i)\in \bbN.  \]  
This proves the theorem. \qed 
\end{proof}

When $A=\Delta$ is a division algebra, any minimal polynomial of
$\Delta$ is 
irreducible. The same proof presented here shows that 
the set of all conjugacy classes of $A^\times$ is parametrized by 
monic irreducible polynomials $p\neq t$ such that 
$\deg p \cdot\deg(\Delta_p)=\deg(\Delta)$.   

When ${\rm char}\, F=0$, for example if $F$ is a number field,
Theorem~\ref{9} gives a complete description of conjugacy classes
of the multiplicative group $A^\times$.  

\section*{Acknowledgements}

The author was partially supported by the grants 
NSC 97-2115-M-001-015-MY3, 100-2628-M-001-006-MY4 and AS-98-CDA-M01.
He is grateful to Ming-Chang Kang for his interest and 
helpful comments.

\end{document}